\newtheorem{thm}{Theorem}
\newtheorem*{thmnonum}{Theorem}
\newtheorem{cor}[thm]{Corollary}
\newtheorem*{rem}{Remark}
\newtheorem*{ack}{Acknowledgements}
\newtheorem*{ex}{Example}
\newcommand{\SL}{{\rm SL}}
\newcommand{\C}{\mathbb{C}}
\renewcommand{\H}{\mathbb{H}}
\newcommand{\Z}{\mathbb{Z}}
\newcommand{\Aut}{{\rm Aut}}
\newcommand{\legen}[2]{\genfrac{(}{)}{}{}{#1}{#2}}
\newcommand{\Gen}{{\rm Gen}}
\begin{document}

\title[Quadratic forms representing all integers coprime to $3$]{Quadratic forms representing all integers coprime to $3$}
\author{Justin DeBenedetto}
\address{Department of Computer Science and Engineering, University of
Notre Dame, Notre Dame, IN 46556}
\email{jdebened@nd.edu}
\author{Jeremy Rouse}
\address{Department of Mathematics and Statistics, Wake Forest University,
  Winston-Salem, NC 27109}
\email{rouseja@wfu.edu}

\subjclass[2010]{Primary 11E20; Secondary 11F30}
\begin{abstract}
Following Bhargava and Hanke's celebrated 290-theorem, we prove a universality
theorem for all positive-definite integer-valued quadratic forms that
represent all positive integers coprime to $3$. In particular, if
a positive-definite quadratic form represents all positive integers coprime
to $3$ and $\leq 290$, then it represents all positive integers coprime to $3$.
We use similar methods to those used by Rouse to prove (assuming GRH)
that a positive-definite quadratic form representing every odd integer
between $1$ and $451$ represents all positive odd integers.
\end{abstract}

\maketitle

\section{Introduction and Statement of Results}
\label{intro}

The study of which integers are represented by certain quadratic forms
dates back to the time of Diophantus in the 3\textsuperscript{rd} century. Building on this
work, Fermat classified the positive integers that can be written
as a sum of two integer squares, i.e., the numbers represented by
$x^{2} + y^{2}$. In 1770, Lagrange proved that every positive integer
can be written as a sum of four squares.

In 1916, Ramanujan \cite{Ramanujan} gave a list of $55$ quadratic forms
of the form $ax^{2} + by^{2} + cz^{2} + dw^{2}$ and claimed that these
quadratic forms are the only diagonal forms in four variables that represent
all positive integers. Dickson \cite{Dickson} proved Ramanujan's claim (modulo the error that Ramanujan had listed one form that fails to
represent $15$).

We say that a positive-definite quadratic form $Q(\vec{x})$ is an
\emph{integer-matrix} form if $Q(\vec{x}) = \vec{x}^{T} A \vec{x}$,
where $A$ is a matrix with integer entries. We say that $Q$ is
\emph{integer-valued} if
$Q(\vec{x}) = \frac{1}{2} \vec{x}^{T} A \vec{x}$ where $A$ is a matrix
with integer entries and even diagonal entries.

In her Ph.D. thesis, Willerding \cite{Willerding} classified \emph{universal}
integer-matrix quaternary forms, those that represent all positive integers.
In 1993, Conway and Schneeberger proved the following theorem giving
a nice classification of universal forms in any number of variables
(see \cite{SchneebergerThesis}).
\begin{thmnonum}[``The 15-Theorem'']
\label{CS}
A positive-definite integer-matrix quadratic form is universal if and only if
it represents the numbers
\[
  1, 2, 3, 5, 6, 7, 10, 14, \text{ and } 15.
\]
\end{thmnonum}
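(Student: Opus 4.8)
\emph{Proof strategy.} The forward implication is immediate: a universal form represents, in particular, the nine listed integers. For the converse, the plan is to run the \emph{escalation} argument of Conway and Schneeberger. Identify a positive-definite integer-matrix quadratic form with its lattice $L$, and, when $L$ is not universal, call the least positive integer it fails to represent the \emph{truant} of $L$. An \emph{escalation} of $L$ is a lattice $L' \supseteq L$ with $\mathrm{rank}(L') = \mathrm{rank}(L)+1$ that represents the truant of $L$, and an \emph{escalator lattice} is one obtained from the zero lattice by a finite chain of escalations. The first step is to note, via the reduction theory of positive-definite forms, that each $L$ admits only finitely many escalations up to isometry: after replacing the new generator $v$ by a vector reduced modulo $L$, its norm may be taken to be at most the truant and its inner products against a reduced basis of $L$ are bounded in terms of the diagonal entries of $L$, so the Gram matrix of $L'$ is confined to a finite set. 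By induction it follows that there are finitely many escalator lattices in each dimension, and they can be listed explicitly.

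Next I would carry out that enumeration, one dimension at a time. The only one-dimensional escalator is $\langle 1\rangle$, with truant $2$; escalating it produces exactly the two-dimensional escalators $x^2+y^2$ and $x^2+2y^2$, then the three-dimensional escalators, and finally the four-dimensional ones (there are $207$ of these). No escalator of dimension at most three is universal --- each visibly omits some small integer --- so each has a truant, which I would record. Among the four-dimensional escalators I would single out those that are universal; for each that is not, I would record its truant and escalate it once more, and then verify that the escalation terminates, since every five-dimensional escalator turns out to be universal. Collecting together every truant that occurs anywhere in this tree, the key point is that they form precisely the set $\{1,2,3,5,6,7,10,14,15\}$, of which $15$ is the largest.

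Granting this, the theorem drops out. Suppose $Q$ represents all nine of these numbers. Since $Q$ represents $1$, it contains a copy of $\langle 1\rangle$; and whenever the escalator sublattice $L\subseteq Q$ built so far is not yet universal, its truant is one of the nine numbers and so is represented by $Q$, whence (a brief reduction-theoretic argument shows) $L$ may be enlarged to an escalator of one larger rank still contained in $Q$. Since the escalation tree, pruned at its universal nodes, is finite, this must terminate at a universal escalator lattice sitting inside $Q$, and therefore $Q$ is itself universal.

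The main obstacle is the assertion, invoked above, that the designated four- and five-dimensional escalators really are universal, for universality of a quaternary form has no soft characterization. A large majority of the $207$ quaternary escalators contain a proper sublattice that is manifestly universal and so present no difficulty, but a stubborn minority demand genuine input from the arithmetic of quadratic forms: either a genus and spinor-genus analysis together with local solvability, or a bound on the cuspidal error term in the circle method (in the spirit of Kloosterman's refinement, i.e.\ estimates for the relevant cusp forms) showing that all sufficiently large integers are represented, followed by a finite check of the remaining small ones. Making this step rigorous for every form on the list is exactly what Bhargava's treatment accomplishes, and it is the portion of the argument closest in spirit to the modular-form computations employed elsewhere in this paper.
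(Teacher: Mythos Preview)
The paper does not give its own proof of the 15-Theorem: it is stated as background and attributed to Conway--Schneeberger (and to Bhargava's reproof), with citations to \cite{SchneebergerThesis} and \cite{Bhar}. So there is no in-paper argument to compare your proposal against.

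That said, your sketch is exactly Bhargava's escalation argument, and it is the same machinery the paper deploys in Sections~\ref{escalate}--\ref{pfs} to prove its own CCXC Theorem for integer-valued forms representing integers coprime to $3$. In that sense your approach is fully aligned with the spirit of the paper. One small correction: after $\langle 1\rangle$ with truant $2$, the integer-matrix escalations have Gram matrices $\begin{bmatrix}1&a\\a&2\end{bmatrix}$ with $a\in\{0,\pm1\}$, and the $a=\pm1$ cases reduce to $x^{2}+y^{2}$; so your list of binary escalators is right, but the justification is reduction of the Gram matrix rather than an a priori restriction to diagonal forms. Your closing paragraph correctly identifies the genuine work: proving universality of the terminal quaternary (and, where needed, quinary) escalators is the hard part, and Bhargava handles most of them by exhibiting a universal sublattice or a ``unique'' cover by a form of class number one, with only a handful requiring deeper input. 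Everything you wrote is a faithful outline of that proof, though of course it remains a sketch until the finite enumeration and the universality checks are actually carried out.
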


This theorem was elegantly reproven by Bhargava in 2000 (see
\cite{Bhar}).  Bhargava's approach is to work with integral lattices,
and to classify escalator lattices - lattices that must be inside any
lattice whose corresponding quadratic form represents all positive
integers. As a consequence, Bhargava was able to correct some errors in
Willerding's work.

Bhargava's approach is quite general. Indeed, he has proven that for
any infinite set $S$, there is a unique minimal finite subset $S_{0}$
of $S$ so that any positive-definite integral quadratic form
represents all numbers in $S$ if it represents the numbers in
$S_{0}$. Here the notion of integral quadratic form can mean either
integer-matrix or integer-valued (and the set $S_{0}$ depends on which
notion is used).

While working on the 15-Theorem, Conway and Schneeberger were led to conjecture
that every integer-valued quadratic form that represents the
positive integers between 1 and 290 must be universal. Bhargava and
Hanke's celebrated 290-Theorem proves this conjecture (see \cite{BH}).
Their result is the following.
\begin{thmnonum}[``The 290-Theorem'']
If a positive-definite integer-valued quadratic form
represents the 29 integers
\begin{align*}
 & 1, 2, 3, 5, 6, 7, 10, 13, 14, 15, 17, 19, 21, 22, 23, 26, 29,\\
 & 30, 31, 34, 35, 37, 42, 58, 93, 110, 145, 203, \text{and } 290,
\end{align*}
then it represents all positive integers.
\end{thmnonum}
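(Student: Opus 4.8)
The plan is to prove the 290-Theorem by Bhargava's method of escalator lattices, combined with an effective analytic lower bound for the representation numbers of quaternary forms. Work with the lattice-theoretic reformulation: a positive-definite integer-valued quadratic form in $n$ variables is the same as a rank-$n$ lattice $L$ equipped with an integer-valued quadratic map $Q$. Call $L$ an \emph{escalator} if it arises in the following recursion: the zero lattice is an escalator, and if $M$ is an escalator that fails to represent some positive integer, then, writing $t$ for the \emph{truant} of $M$ (the least positive integer it does not represent), the lattice $M + \Z v$ is an escalator for every vector $v$ with $Q(v) = t$ whose inner products with a fixed basis of $M$ take one of the finitely many values allowed by positive-definiteness. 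First I would establish the structural facts: every escalator has rank at most $5$; there are only finitely many escalator lattices, since the truants that can occur are bounded, so they may be enumerated by computer; every rank-$5$ escalator is universal; and the set of all truants that occur in the recursion is exactly the $29$ integers listed. Granting these, the theorem follows formally: if $Q$ represents the $29$ listed integers, then starting from the zero lattice and repeatedly adjoining vectors of $Q$ that realize the successive truants — which $Q$ represents, being among the $29$ — produces a chain of escalator sublattices of $Q$ that cannot continue forever, because ranks are bounded by $5$ and rank-$5$ escalators are universal; it therefore terminates at a universal escalator contained in $Q$, so $Q$ is universal.

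The substance of the argument is then to decide, for each escalator of rank $\le 4$, whether it is universal and, if not, to locate its truant. Ranks $\le 3$ are immediate, since no positive-definite ternary form is universal, so these always escalate; the work concentrates on the large but finite collection of quaternary escalators. For such an $L$, form the theta series $\theta_L(z) = \sum_{k \ge 0} r_L(k)\, q^k$, a modular form of weight $2$ on $\Gamma_0(N)$ with quadratic character, where $N$ is governed by the discriminant of $L$, and decompose $\theta_L = E_L + C_L$ with $E_L$ Eisenstein and $C_L$ cuspidal. By Siegel's mass formula the Eisenstein coefficient $a_{E_L}(k)$ equals the genus average of the representation numbers, hence a product of local densities; one deduces $a_{E_L}(k) \gg_\varepsilon k^{1-\varepsilon}$ for every $k$ that is represented by $L$ over $\R$ and over $\Z_p$ for all $p$, provided $k$ avoids the finitely many ``anisotropic'' primes of $L$ at which the local density may fail to be bounded below. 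For the cuspidal part, Deligne's bound on Hecke eigenvalues gives $|a_{C_L}(k)| = O_\varepsilon(k^{1/2+\varepsilon})$, and bounding the Petersson norm of $C_L$ makes the implied constant explicit. Comparing the two estimates yields an effective $N_L$ such that every $k \ge N_L$ that is locally represented by $L$ is globally represented; the finitely many locally-represented $k < N_L$ are then checked by hand. For an escalator whose set of locally-represented integers is exactly those coprime to $3$, this is precisely the universality-away-from-$3$ statement, with a finite residual range.

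The main obstacle — where the genuine difficulty of the 290-Theorem resides — is the family of \emph{exceptional} quaternary escalators: those whose genus has class number greater than $1$ and for which the Eisenstein lower bound degenerates on an entire square class of integers, so that the crude inequality $a_{E_L}(k) > |a_{C_L}(k)|$ fails for infinitely many $k$ and the argument above does not close. For each such $L$ one must pin down the offending square classes and, on each of them, replace the naive comparison by a finer one: bound $r_L(k)$ below by $r_{L'}(k)$ for a well-chosen ternary sublattice $L'$, or approximate $\theta_L$ on that class by a twist whose behavior is understood, and control the resulting error term using the special structure of the obstructing cusp form — in the cases that arise it has complex multiplication or is an eta-quotient, and its coefficients obey bounds far sharper than Deligne's generic estimate — so that, once more, a bounded verification suffices. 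Organizing this ``approximate and verify'' step uniformly over the exceptional forms, certifying the delicate finite numerical checks it reduces to, and confirming that no truant beyond $290$ is ever needed, is the bulk of the proof. It is also the step the companion results exploit: restricting to integers coprime to $3$ suppresses some of the worst anisotropic behavior at $3$ and shrinks both the exceptional list and the numerical range, while for the odd-integers problem the ternary sublattice analysis is pushed further using (conditionally on GRH) sharper bounds on coefficients of weight-$3/2$ cusp forms in the spirit of Rouse's work — which is why the analogous coprime-to-$3$ and odd-integer theorems follow from these same methods.
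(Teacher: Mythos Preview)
The paper does not prove the 290-Theorem at all: it is quoted in the introduction as a prior result of Bhargava and Hanke \cite{BH}, and the paper's own contribution is the analogous CCXC Theorem for integers coprime to $3$. So there is no ``paper's own proof'' of this statement to compare your proposal against.

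That said, your outline is broadly the Bhargava--Hanke strategy, and it is also the template this paper follows for its own theorem. A few points where your write-up is loose or misleading:

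\begin{itemize}
\item You present ``every escalator has rank at most $5$'', ``there are only finitely many escalator lattices'', and ``the set of truants is exactly the $29$ listed'' as structural facts to be established first, after which the theorem ``follows formally''. This inverts the logic. Finite branching at each node is immediate from Cauchy--Schwarz, but termination of the escalation tree and the precise list of truants are the \emph{output} of the entire analytic and computational effort, not inputs to it. You only learn that rank-$5$ escalators are universal (if that is even the right cutoff) after you have analyzed every quaternary escalator, determined its truant or its universality, escalated the non-universal ones, and verified the resulting higher-rank forms.
\item The sentence ``For an escalator whose set of locally-represented integers is exactly those coprime to $3$, this is precisely the universality-away-from-$3$ statement'' does not belong in a proof of the 290-Theorem, which concerns representation of \emph{all} positive integers. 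You appear to be splicing in material from the CCXC and 451 problems. Likewise, the closing remarks about restricting to integers coprime to $3$ and about GRH for the odd-integers case are commentary on other theorems, not part of the argument here.
\item Your description of the ``exceptional'' quaternary escalators is qualitatively right but vague at the crucial point. In Bhargava--Hanke (and in this paper) the actual mechanism is not a general ``approximate and verify'' on square classes: one finds a \emph{specific} regular ternary sublattice $K$ such that $K \oplus K^{\perp}$ locally represents the target set, reducing the problem to congruence conditions plus a finite check; or one computes the cusp-form constant $C_Q$ exactly via modular symbols; or one bounds $C_Q$ through Petersson norms. Invoking CM or eta-quotient structure is not the organizing principle.
\end{itemize}
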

They also show that every one of the twenty-nine integers above is necessary.
Indeed, for every integer $t$ on this list, there is a positive-definite
integer-valued quadratic form that represents every positive integer except $t$.
As a consequence of the 290-Theorem, they are able to prove that there are
exactly $6436$ universal integer-valued quaternary quadratic forms.

Bhargava has shown that if $Q$ is a positive-definite integer
matrix form that represents the integers from $1$ up to $33$, then $Q$
represents all positive odd integers.  In \cite{Rouse}, the second
author proved (assuming the generalized Riemann hypothesis) that an integer-valued form that represents the odd integers $\leq 451$ must represent all odd integers. Recently, Barowsky, Damron, Mejia, Saia, Schock and Thomspon gave a classification
of the possible sets $\{m,n\}$ of exceptions for an integer-matrix form with exactly two exceptions (see \cite{BDMSST}). 
The goal of the present paper is to let $S$ be the set of positive integers 
coprime to $3$, and to compute the minimal subset $S_{0}$ of integers that an integer-valued form $Q$ must represent in order to represent
everything in $S$. Our main result is the following.

\begin{thm}[The CCXC Theorem]
\label{main}
If a positive-definite integer-valued quadratic form represents
the following 31 integers
\begin{align*}
  & 1, 2, 5, 7, 10, 11, 13, 14, 17, 19, 22, 23, 26, 29, 31, 34, 35,\\
  & 37, 38, 46, 47, 55, 58, 62, 70, 94, 110, 119, 145, 203, 290,
\end{align*}
then it represents all positive integers coprime to $3$.
\end{thm}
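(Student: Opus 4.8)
The plan is to adapt the method of \emph{escalator lattices}, used by Bhargava for the $15$-theorem and by Bhargava and Hanke for the $290$-theorem. Pass from a positive-definite integer-valued quadratic form $Q$ in $n$ variables to the lattice $L \cong \Z^n$ equipped with the bilinear form $B(x,y) = Q(x+y) - Q(x) - Q(y)$, and call a positive integer \emph{good} if it is coprime to $3$. If the form of a lattice $L$ fails to represent some good integer, let its \emph{truant} $t(L)$ be the least good integer not represented by $L$. Any lattice $L' \supseteq L$ whose form represents every good integer must contain a vector $v$ with $Q(v) = t(L)$; the finitely many lattices $L + \Z v$ obtained this way, as $v$ ranges over vectors of norm $t(L)$ whose inner products against a basis of $L$ are constrained by Cauchy--Schwarz and by integrality and parity, are the \emph{escalations} of $L$. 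Beginning with the zero lattice and escalating repeatedly produces a rooted tree of \emph{escalator lattices}, and every lattice that represents all good integers contains a leaf of this tree. The feature special to this problem is that the $3$-adic behaviour of $Q$ is essentially unconstrained, which both widens the escalations (the norm of an added vector may be divisible by a high power of $3$) and, later, makes the relevant local densities at $3$ easy to control. The first step is to run this escalation, verify that the tree is finite with all branch points and leaves of dimension at most five, and record that the truants that appear are exactly the $31$ integers in the statement.

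The combinatorial core is the enumeration: a single one-dimensional escalator, a few two-dimensional ones, on the order of a hundred three-dimensional ones, and several thousand four-dimensional escalator lattices. For each four-dimensional escalator $Q$ one must decide whether every good integer lies in the set of values of $Q$; this is where modular forms enter. The theta series $\theta_Q(z) = \sum_{v \in L} q^{Q(v)}$, with $q = e^{2\pi i z}$, is a modular form of weight $2$ on $\Gamma_0(N)$ with a quadratic character, $N$ being essentially the level of $L$. Write $\theta_Q = E_Q + C_Q$ for its decomposition into Eisenstein and cuspidal parts. By Siegel--Weil, $E_Q$ is the genus theta series, so its $n$-th coefficient is a product of local densities; for good $n$ one checks that the density at $3$ is positive, that the densities at the finitely many bad primes are bounded below in terms of $N$ alone, and that the archimedean factor is $\gg \sqrt n$, whence $a_{E_Q}(n) \gg_N n^{1-\epsilon}$ for $n$ coprime to $3$. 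Meanwhile $C_Q(z) = \sum_i c_i\, f_i(d_i z)$ with the $f_i$ weight-$2$ newforms, and Deligne's bound gives $|a_{C_Q}(n)| \le \big(\sum_i |c_i| \sqrt{d_i}\big)\sigma_0(n)\sqrt n$. Hence there is an explicit $B_Q$ with $a_{E_Q}(n) > |a_{C_Q}(n)|$, and therefore $a_{\theta_Q}(n) > 0$, for every good $n > B_Q$; the finitely many good $n \le B_Q$ are then checked by direct computation. Four-dimensional escalators that still miss a good integer are themselves escalated, and the resulting five-dimensional lattices are handled the same way, or by comparing $\theta_Q$ with the theta series of a four-dimensional sublattice.

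I expect the main obstacle to be the four-dimensional escalators for which the cusp constant $\sum_i |c_i|\sqrt{d_i}$ is large relative to the Eisenstein constant, so that $B_Q$ is too big for a naive finite check. For this bounded list of ``exceptional'' forms one needs a finer analysis: bounding the coefficients of the individual newforms $f_i$ more sharply (many are CM forms or have small conductor), peeling off from $C_Q$ a piece proportional to the theta series of a related escalator and estimating only the genuinely small remainder, and exploiting congruences modulo powers of $2$ and $3$ that force $a_{C_Q}(n)$ either to vanish or to be divisible by a large integer on the relevant arithmetic progressions. This is exactly the step where Rouse's treatment of the ``odd'' problem required GRH (to bound the least element of a given square class at which $a_{E_Q}$ is small, and to obtain square-root cancellation in short sums of cusp-form coefficients); the expectation here, and the reason the theorem is unconditional, is that when $S$ is the set of integers coprime to $3$ every exceptional four-dimensional escalator can be dispatched by such explicit, if lengthy, arguments without any hypothesis. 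Finally, minimality of the list requires that for each of the $31$ integers $t$ one exhibit a positive-definite integer-valued form representing every good integer except $t$; such forms are already visible in the escalation tree, since $t$ is the truant of some escalator all of whose other escalations have been shown to represent every good integer, so the necessity of the list drops out of the same computation rather than needing a separate argument.
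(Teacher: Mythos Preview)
Your overall architecture matches the paper: escalate the zero lattice, stop at the quaternary stage, and for each quaternary escalator compare the Eisenstein main term against a Deligne-bounded cusp term. Two substantive points separate your sketch from what the paper actually does.

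First, the paper does \emph{not} push all $9611$ quaternary escalators through the modular-forms machine. About half of them ($646+3631$) are disposed of by purely lattice-theoretic shortcuts: either the quaternary already contains one of the eleven regular ternaries of Theorem~\ref{ternaries}, or it contains some regular ternary $K$ (from the Jagy--Kaplansky--Schiemann list) with $K\oplus K^{\perp}$ locally representing everything coprime to $3$, after which a congruence sieve finishes the job. You do not mention this, and while in principle these forms could be forced through the analytic method, in practice that would be prohibitively expensive.

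Second, and this is the real gap, your plan for the hard quaternaries is not what works. You propose sharpening Deligne for individual newforms, ``peeling off'' pieces of $C_Q$, and exploiting congruences mod powers of $2$ and $3$. The paper does none of this. Instead, for the $3067$ forms whose determinant is a fundamental discriminant (so that $S_2(\Gamma_0(N),\chi)$ is all new), it bounds $C_Q=\sum_i|c_i|$ via Cauchy--Schwarz by $\sqrt{s\,\langle C,C\rangle/\min_i\langle g_i,g_i\rangle}$, computes $\langle C,C\rangle$ numerically from an explicit Bessel-kernel formula applied to the \emph{dual} form $Q^*$, and lower-bounds each $\langle g_i,g_i\rangle$ by an explicit constant depending only on $N$ (Proposition~11 of \cite{451paper}). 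This Petersson-norm method is the key technical import from the $451$-paper, and without it (or something equivalently effective) your outline does not close: the brute-force computation of the $c_i$ by modular symbols costs up to a day per form, and your suggested ad hoc tricks are not shown to cover every case.

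Two small corrections. For a quaternary form the archimedean density is $\pi^2 n/\sqrt{\det A}$, so $\asymp n$, not $\asymp\sqrt n$; your stated conclusion $a_{E_Q}(n)\gg n^{1-\epsilon}$ is right but the intermediate claim is not. And in your cusp bound the constant should be $\sum_i|c_i|/\sqrt{d_i}$, not $\sum_i|c_i|\sqrt{d_i}$, since the $n$th coefficient of $g_i(d_iz)$ is $a_{g_i}(n/d_i)$. Finally, your explanation of why GRH is avoided is off: in the $451$-theorem the conditionality enters through the regularity of certain ternary forms (those proved regular only under GRH in \cite{RLO}), not through cusp-form estimates; here the paper simply uses only the $899$ unconditionally regular ternaries.
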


Here are two corollaries.

\begin{cor}
\label{necessary}
For every single one of the positive integers $t$ in the above list,
there is a positive-definite integer-valued quadratic form $Q$ that represents
every positive integer coprime to $3$ except $t$.
\end{cor}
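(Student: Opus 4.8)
The plan is to produce, for each of the $31$ integers $t$ in the list, an explicit positive-definite integer-valued quadratic form $Q_t$ (in four or five variables, located by a computer search through forms of small determinant) with the property that $Q_t$ represents every positive integer coprime to $3$ except $t$. Note that $Q_t$ is permitted to miss arbitrarily many multiples of $3$, which makes such forms far easier to come by than genuinely almost-universal forms. Once a candidate $Q_t$ is in hand, the assertion that $Q_t$ fails to represent $t$ is a finite check: enumerate all vectors $\vec{x}$ with $Q_t(\vec{x}) \le t$ and confirm that none of them gives the value $t$. The substantive work is to prove that $Q_t$ does represent every \emph{other} positive integer coprime to $3$.

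For that I would argue exactly as in the proof of Theorem~\ref{main} (and as in \cite{BH} and \cite{Rouse}). First, a local analysis: examining $Q_t$ over $\Z_p$ for each prime $p$ shows that every positive integer $n$ coprime to $3$ is represented by the genus of $Q_t$. Second, a global analysis for large $n$: write the representation number as $r_{Q_t}(n) = a_E(n) + a_C(n)$, where $a_E(n)$ is the coefficient of the Eisenstein series attached to the genus of $Q_t$ and $a_C(n)$ is a cusp-form coefficient. Since $Q_t$ has at least four variables, $a_E(n) \gg_\varepsilon n^{1-\varepsilon}$ for $n$ coprime to $3$, while $a_C(n) = O_\varepsilon(n^{1/2+\varepsilon})$ by Deligne's bound (or the corresponding Shimura-lift/subconvexity estimate in the half-integral weight case of a five-variable form). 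Computing the relevant space of cusp forms explicitly and bounding its coefficients yields an effective threshold $N_t$ such that every $n > N_t$ coprime to $3$ is represented by $Q_t$ itself; for the finitely many $n \le N_t$ with $\gcd(n,3)=1$ one then verifies representation directly by computer. Corollary~\ref{necessary} follows.

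The main obstacle is keeping $N_t$ small enough that the terminal finite verification is actually feasible, since a crude cusp-form bound can push $N_t$ up to astronomical size. The practical remedy is to be selective about $Q_t$: whenever possible choose a form of class number $1$, so that $r_{Q_t}(n) = a_E(n)$ exactly and no cusp-form estimate is needed at all; when that is not available, choose $Q_t$ of the smallest level for which a suitable form exists, so that the space of cusp forms is small and its coefficients can be bounded sharply. For a few of the larger values of $t$ it may also be cleaner to use a five-variable form, where the Eisenstein main term grows like $n^{3/2-\varepsilon}$ and dominates the error term much sooner. Carrying out this search-and-verify procedure for all $31$ values of $t$ is computational but routine given the machinery already assembled for Theorem~\ref{main}.
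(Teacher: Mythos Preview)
Your approach would work, but it is far more laborious than necessary and misses a clean shortcut. You propose to locate each $Q_t$ by an ad hoc search and then certify it with the full Eisenstein-plus-cusp-form machinery, effectively rerunning the analytic engine of Theorem~\ref{main} thirty-one more times (and you also tacitly assume a suitable $Q_t$ in four or five variables of small determinant exists, which is not obvious a priori).

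The paper's argument is essentially a two-line construction requiring no new analytic input. For each $t$ in the list, the escalation process behind Theorem~\ref{main} has \emph{already} produced a form $Q$ whose truant is exactly $t$; these are tabulated in Appendix~A. One then takes
\[
  Q(\vec{x}) \;+\; (t+1)(a^{2}+b^{2}+c^{2}+d^{2}) \;+\; \sum_{i} (t+1+i)\,x_{i}^{2},
\]
a direct sum of $Q$ with several new variables, every one of which has coefficient strictly greater than $t$. Because all the added coefficients exceed $t$, any representation of $t$ would force the new variables to vanish, so the enlarged form still misses $t$. On the other hand, Lagrange's four-square theorem makes $(t+1)(a^{2}+b^{2}+c^{2}+d^{2})$ hit every non-negative multiple of $t+1$; combining this with the small values $Q$ already represents (and the handful of extra diagonal terms to fill residue classes) shows the enlarged form represents every integer greater than $t$ outright, not merely those coprime to $3$. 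No local densities, no cusp-form bounds, no threshold $N_t$, no terminal computer check. The point you overlooked is that the escalator data from the main theorem already hands you a form with truant $t$, and padding it with large-coefficient squares is enough.
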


\begin{cor}
\label{corollary}
If a positive-definite integer-matrix quadratic form represents
the following integers
\[
  1, 2, 5, 7, 10, 11, 14, 19, 22, 31, 35,
\]
then it represents all positive integers coprime to $3$.
\end{cor}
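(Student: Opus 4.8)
The plan is to deduce Corollary \ref{corollary} from Theorem \ref{main} by an escalation argument carried out in the category of integer-matrix lattices. The first point to record is the elementary observation that an integer-matrix form is integer-valued: if $Q(\vec{x}) = \vec{x}^{T} A \vec{x}$ with $A$ having integer entries, then $Q(\vec{x}) = \frac{1}{2}\vec{x}^{T}(A + A^{T})\vec{x}$, and $A + A^{T}$ has integer entries with even diagonal. Hence Theorem \ref{main} applies to any integer-matrix $Q$, and it suffices to show that an integer-matrix form representing the eleven integers $1, 2, 5, 7, 10, 11, 14, 19, 22, 31, 35$ automatically represents the remaining CCXC integers $13, 17, 23, 26, 29, 34, 37, 38, 46, 47, 55, 58, 62, 70, 94, 110, 119, 145, 203, 290$.

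To achieve this I would rerun the escalation procedure, but restricted so that every lattice produced is integer-matrix. Starting from the zero lattice $L_{0}$ (whose truant, the least element of $S$ it fails to represent, is $1$), and given an integer-matrix escalator $L$ with truant $t$, I would enumerate up to isometry every integer-matrix lattice $L' = L + \Z v$ with $Q(v) = t$. Here one must remember that keeping $L'$ integer-matrix forces the adjoined vector $v$ to lie in the dual lattice $L^{*}$, which shortens the list of children compared with the integer-valued escalation behind Theorem \ref{main}. Using the usual reduction-theory bounds on the new Gram-matrix entries, I would check that this process yields only finitely many escalator lattices, all of dimension at most four, and that the truant of every escalator of dimension $0$, $1$, $2$, or $3$ lies in $\{1, 2, 5, 7, 10, 11, 14, 19, 22, 31, 35\}$. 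Then for each four-dimensional integer-matrix escalator $L$ I would verify that it represents all $31$ integers appearing in Theorem \ref{main}; this is a finite computation, since one need only enumerate the vectors $v \in L$ with $Q(v) \leq 290$. By Theorem \ref{main}, each such $L$ therefore represents every positive integer coprime to $3$. (In practice I expect almost all of these quaternary escalators to contain a ternary integer-matrix sublattice that represents every sufficiently large integer coprime to $3$ with a short, explicitly computable exception set, so that the modular-forms input used for Theorem \ref{main} is not actually needed for the corollary.)

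With the tree of integer-matrix escalators in hand the corollary follows quickly: if $Q$ is integer-matrix and represents $1, 2, 5, 7, 10, 11, 14, 19, 22, 31, 35$, then one can escalate \emph{inside} $Q$, the escalation being possible at each stage because every truant encountered is one of these eleven integers and hence is represented by $Q$. This exhibits a four-dimensional integer-matrix escalator as a sublattice of $Q$; since that sublattice represents all integers coprime to $3$, so does $Q$.

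The main obstacle is the escalation bookkeeping itself: enumerating all integer-matrix escalator lattices up to isometry (the dimension-three and dimension-four lists are long), confirming for every dimension-four escalator that it represents the full CCXC list, and confirming for every lower-dimensional escalator that its truant is one of the eleven listed integers. No single verification is deep, but the enumeration must be organized and carried out exhaustively and without error, exactly as in Bhargava's treatment of the $15$-theorem.
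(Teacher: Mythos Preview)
Your approach---rerunning the escalation in the integer-matrix category and invoking Theorem~\ref{main} at the leaves---is exactly the paper's strategy, and the reduction ``integer-matrix $\Rightarrow$ integer-valued'' is correct. However, the specific claim that the integer-matrix escalation tree terminates at dimension four is false, and this is the gap.

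In fact, escalating the two binary integer-matrix escalators $x^{2}+y^{2}$ and $x^{2}+2y^{2}$ yields $12$ ternary lattices (two of which are already relatively universal); escalating the other $10$ yields $261$ quaternary integer-matrix escalators, of which $15$ are \emph{not} relatively universal. Escalating those $15$ produces five-dimensional lattices, and two of those---namely $x^{2}+y^{2}+4z^{2}+4zv+7w^{2}+13v^{2}$ and $x^{2}+y^{2}+7z^{2}+13w^{2}+12wv+13v^{2}$---still fail to represent $35$. So the tree has depth at least five, and your verification step ``for each four-dimensional integer-matrix escalator $L$, check that it represents all $31$ CCXC integers'' would fail for $15$ lattices. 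The fix is simply to let the escalation run further: one checks that the $15$ quaternaries each have truant among the eleven listed integers, and that the two exceptional five-dimensional lattices represent every integer coprime to $3$ up to $290$ \emph{except} $35$; hence any escalation of them by $35$ represents the full CCXC list and is relatively universal by Theorem~\ref{main}. This is precisely why $35$ appears in the corollary's list. With that correction your argument coincides with the paper's.
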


To prove the CCXC Theorem, we must determine exactly which positive, squarefree
integers that are coprime to $3$ are represented by a collection of $9611$
quaternary quadratic forms. Any form that represents all positive integers
coprime to $3$ must represent either one of 11 regular ternary quadratic forms,
or one of the quaternary forms on this list.

To analyze the quaternary forms, we use a combination of four
methods. These methods are the same ones used in the proof of the 451-theorem of \cite{451paper}. The first method checks to see if a given quaternary
represents any of the $11$ regular ternaries mentioned above. If so,
it represents all positive integers coprime to $3$. This method
succeeds for $646$ of the $9611$ quaternary forms.

The second method attempts to find, given the integer lattice $L$
corresponding to $Q$, a regular ternary sublattice $K$ so that
$K \oplus K^{\perp}$ locally represents everything coprime to $3$.
We make use of the classification of regular ternary quadratic
forms due to Jagy, Kaplansky, and Schiemann \cite{JKS}. This method
is successful for $3631$ of the remaining forms.

The last two methods rely on the theory of modular forms. For a
positive-definite quaternary form $Q$, we define
$\chi(n) = \legen{\det(A)}{n}$ to be the usual Kronecker
character. Then the theta series
\[
  \theta_{Q}(z) = \sum_{n=0}^{\infty} r_{Q}(n) q^{n}, \quad q = e^{2 \pi i z}
\]
is a modular form of weight $2$, level $N$ and character $\chi$. We
can decompose $\theta_{Q}(z)$ as
\begin{align*}
  \theta_{Q}(z) &= E(z) + C(z)\\
  &= \sum_{n=0}^{\infty} a_{E}(n) q^{n} + \sum_{n=1}^{\infty} a_{C}(n) q^{n}.
\end{align*}
A lower bound on $a_{E}(n)$ is given in Theorem 5.7 of \cite{Hanke}
and shows that
\[
  a_{E}(n) \geq C_{E} n \prod_{\substack{p | n \\ \chi(p) = -1}} \frac{p-1}{p+1}
\]
for some some constant $C_{E}$, depending on $Q$, provided $n$ is squarefree
and locally represented by $Q$. We may decompose the form $C(z)$ into
a linear combination of newforms (and the images of newforms under $V(d)$).
It is known that the $n$th Fourier coefficient of a newform
of weight $2$ is bounded by $d(n) n^{1/2}$ by the Deligne bound. Thus, there is
a constant $C_{Q}$ so that
\[
  |a_{C}(n)| \leq C_{Q} d(n) n^{1/2}.
\]
If we can compute or bound the constants $C_{E}$ and $C_{Q}$, we can determine
the squarefree integers represented by $Q$ via a finite computation.

Finally, the third method explicitly computes the constant $C_{Q}$
by doing extensive exact linear algebra using Magma. This method handles
$2267$ forms.

The fourth method gives an upper bound on $C_{Q}$ without explicitly computing
it. This method may only be used when $\det(A)$ is a fundamental discriminant.
The Petersson inner product is defined for $f, g \in S_{2}(\Gamma_{0}(N), \chi)$ by
\[
  \langle f, g \rangle = \frac{3}{\pi [\SL_{2}(\Z) : \Gamma_{0}(N)]}
  \iint_{\H/\Gamma_{0}(N)} f(x+iy) \overline{g}(x+iy) \, dx \, dy.
\]
If one can compute an upper bound on $\langle C(z), C(z) \rangle$
and a lower bound on $\langle g_{i}(z), g_{i}(z) \rangle$ for each newform
$g_{i}(z)$ in $S_{2}(\Gamma_{0}(N), \chi)$, one can derive an upper bound
on $C_{Q}$. We use the same machinery from \cite{451paper} to accomplish
these things. We use this method for the remaining $3067$ forms.

One different feature of the present work is the use of the exact
formulas due to Yang \cite{Yang} for the local densities appearing in
the formula for $a_{E}(n)$. These provide for efficient computation.

An outline of the paper is as follows. In Section~\ref{back} we review
background about quadratic forms and modular forms. In Section~\ref{escalate}
we describe the theory of escalator lattices and our use of it. In Section~\ref{regtern} we describe in detail our first two methods relying on properties
of ternary quadratic forms. In Section~\ref{modform} we describe
our first modular form based method (the only method used by Bhargava
and Hanke in \cite{BH}), and in Section~\ref{petnorm} we describe our
second modular form method (pioneered in \cite{451paper}). Finally,
in Section~\ref{pfs} we prove Theorem~\ref{main}, Corollary~\ref{necessary}, and Corollary~\ref{corollary}.

\begin{ack}
The authors used the computer software package Magma \cite{Magma} version 2.21-1
extensively for the computations. Magma scripts and log files from the computations
done are available at {\normalfont \url{http://users.wfu.edu/rouseja/CCXC/}}. This work represents the master's thesis
of the first author completed at Wake Forest University in the spring of 2015.
\end{ack}

\section{Background}
\label{back}

If $Q$ is an integer-valued quadratic form in $r$ variables, then $Q(\vec{x}) = \frac{1}{2} \vec{x}^{T} A \vec{x}$
for some $r \times r$ matrix $A$. We say that the discriminant of $Q$ is $\det(A)$, and the level of $Q$ is the
smallest positive integer $N$ so that $NA^{-1}$ has integer entries and even diagonal entries.

If $N$ is a positive integer, define $\Gamma_{0}(N) = \left\{ \begin{bmatrix} a & b \\ c & d \end{bmatrix} \in \SL_{2}(\Z) :
N | c \right\}$. If $k$ is a positive even integer, let $M_{k}(\Gamma_{0}(N), \chi)$ denote the $\C$-vector space of
modular forms $f$ so that
\[
  f\left(\frac{az+b}{cz+d}\right) = \chi(d) (cz+d)^{k} f(z).
\]
Let $S_{k}(\Gamma_{0}(N), \chi)$ denote the subspace of cusp forms. The operator $V(d)$ is defined
by $\sum a(n) q^{n} \mapsto \sum a(n) q^{dn}$ and maps $S_{k}(\Gamma_{0}(N), \chi)$ to
$S_{k}(\Gamma_{0}(dN), \chi)$. The \emph{old subspace} of $S_{k}(\Gamma_{0}(N), \chi)$
is defined to be the span of the images of $V(e) : S_{k}(\Gamma_{0}(N/d), \chi) \to S_{k}(\Gamma_{0}(N), \chi)$
where $e$ runs over divisors of $d$, and $d$ runs over divisors of $N$. The \emph{new subspace}
of $S_{k}(\Gamma_{0}(N), \chi)$ is defined to be the orthogonal complement of the old subspace under
the Petersson inner product. This new subspace is spanned by \emph{newforms} - Hecke eigenforms lying
in the new subspace that are normalized so the Fourier coefficient of $q = e^{2 \pi i z}$ is $1$.

If $Q$ is a positive-definite integer-valued quadratic form in $r$ variables, let
$r_{Q}(n) = \{ \vec{x} \in \Z^{r} : Q(\vec{x}) = n \}$. The \emph{theta series} of $Q$ is
\[
  \theta_{Q}(z) = \sum_{n=0}^{\infty} r_{Q}(n) q^{n}.
\]
If $N$ is the level of $Q$ and $r$ is even, then $\theta_{Q} \in M_{r/2}(\Gamma_{0}(N), \chi_{D})$ (see
Theorem 10.8 of \cite{Iwa}). Here $\chi_{D}(\cdot) = \legen{(-1)^{r/2} \det A}{\cdot}$ is the usual Kronecker symbol.
As noted in Section~\ref{intro}, $\theta_{Q}(z)$ has a decomposition
$E(z) + C(z) = \sum_{n=0}^{\infty} a_{E}(n) q^{n} + \sum_{n=1}^{\infty} a_{C}(n) q^{n}$, into an Eisenstein series and a cusp form.

We can associate a lattice $L$ to a positive-definite integer-valued quadratic form $Q$ by letting $L = \Z^{r}$
and defining an inner product on $L$ by setting
\[
  \langle \vec{x}, \vec{y} \rangle = \frac{1}{2} \left(Q(\vec{x} + \vec{y}) - Q(\vec{x}) - Q(\vec{y})\right).
\]
We have that $\langle \vec{x}, \vec{x} \rangle = Q(\vec{x})$ is integral, but arbitrary inner products $\langle \vec{x}, \vec{y} \rangle$
need not be integral. If $Q = \frac{1}{2} \vec{x}^{T} A \vec{x}$,
we say that $A$ is the \emph{Gram matrix} of $L$. We will move freely
between a quadratic form and its corresponding lattice, and use adjectives
that apply to quadratic forms to refer to lattices and vice versa.

For a prime $p$, let $\Z_{p}$ denote the ring of $p$-adic integers. We say that a positive-definite form
$Q$ \emph{locally represents} an integer $m$ if $m > 0$ and for all primes $p$, there is some $\vec{x} \in \Z_{p}^{r}$
so that $Q(\vec{x}) = m$. If $Q$ is fixed, we let $\Gen(Q)$ denote the finite collection of positive-definite integral
forms $R$ so that $R$ is equivalent to $Q$ over $\Z_{p}$ for all primes $p$. By work of Siegel \cite{Siegel}, we have that
\[
  \sum_{n=0}^{\infty} a_{E}(n) q^{n}
    = \frac{\sum_{R \in \Gen(Q)} \theta_{R}(z)/\# \Aut(R)}{\sum_{R \in \Gen(Q)} 1/\# \Aut(R)}
    = \sum_{n=0}^{\infty} \left(\prod_{p \leq \infty} \beta_{p}(n)\right) q^{n},
\]
where $\beta_{p}(n)$ is the \emph{local density} associated to $p$, $Q$ and $n$. It follows from this formula
that if $|\Gen(Q)| = 1$, then $Q$ represents every integer $n$ that is locally represented by $Q$. If $Q$ is a quadratic form
that represents all positive integers $n$ that are locally represented, we say that $Q$ is \emph{regular}.

\section{Escalators}
\label{escalate}

Fix a set $S$ of positive integers. Given a quadratic form $Q$ with corresponding lattice $L$,
we say that an element $t \in S$ is an \emph{exception} for $Q$ if $Q$ does not represent $t$. We call
the smallest exception the \emph{truant} of $Q$. If $Q$ is a quadratic form with truant $t$ with corresponding lattice $L$,
an \emph{escalation} of $L$ is a lattice $L'$ generated by $L$ and a vector of norm $t$. We say that $L$ (or $Q$) is
\emph{relatively universal} if it represents everything in $S$ (or equivalently, if it has no truant). An \emph{escalator lattice}
is a lattice obtained by repeated escalation of the unique zero-dimensional lattice.

Write $S = \{ t_{1}, t_{2}, t_{3}, \ldots \}$ with $t_{i} < t_{j}$ if $i < j$. If $L$ is a relatively universal lattice, then it contains a vector of norm $t_{1}$,
and hence an escalator lattice $L_{1}$ generated by $t_{1}$. If $L_{1}$ is not relatively universal, then there is a vector in $L$ with norm equal to
the truant of $L_{1}$. Then $L$ must contain some escalation $L_{2}$ of $L_{1}$. Continuing in this way, we get a sequence
of escalator lattices $L_{1} \subseteq L_{2} \subseteq L_{3} \subseteq \cdots \subseteq L$ with the property that $L_{i}$
represents (at least) the first $i$ elements of $S$. Since $L \cong \Z^{r}$ is a noetherian $\mathbb{Z}$-module,
this ascending chain of lattices stabilizes. Thus, there is some $L_{n} \subseteq L$ that is a relatively universal escalator lattice.

We are concerned with the case that $S = \{ n \geq 1 : \gcd(n,3) = 1 \}$.
To prove Theorem~\ref{main}, we begin by escalating the zero-dimensional lattice. We obtain the $1$-dimensional lattice with
Gram matrix $A = \begin{bmatrix} 2 \end{bmatrix}$ corresponding to the quadratic form $Q(\vec{x}) = \frac{1}{2} \vec{x}^{T} A \vec{x} = x^{2}$.
This has truant $2$ and (by the Cauchy-Schwarz inequality), its escalations are those lattices with Gram matrices
\[
  \begin{bmatrix}
  2 & 0\\
  0 & 4 \end{bmatrix}, \quad \begin{bmatrix}
  2 & 1\\
  1 & 4 \end{bmatrix}, \quad \begin{bmatrix}
  2 & 2 \\
  2 & 4 \end{bmatrix}.
\]
The first two lattices have truant $5$, and the third has truant $7$. The escalation of these lattices result in $50$ three-dimensional
lattices. Of these, $11$ have no truant below $1000$. In fact, all of these are relatively universal.

\begin{thm}
\label{ternaries}
The following $11$ ternary quadratic forms represent all positive integers coprime to $3$.
\begin{align*}
  & x^{2} - xy + y^{2} + z^{2}\\
  & x^{2} + xy + 2y^{2} + yz + 4z^{2}\\
  & x^{2} + xy + 2y^{2} - xz - 2yz + 4z^{2}\\
  & x^{2} + xy + 2y^{2} - xz + 2z^{2}\\
  & x^{2} + xy + 2y^{2} - xz + 5z^{2}\\
  & x^{2} + xy + 2y^{2} + 3z^{2}\\
  & x^{2} + y^{2} + xz - yz + 2z^{2}\\
  & x^{2} + y^{2} + 3z^{2}\\
  & x^{2} + y^{2} - xz + 4z^{2}\\
  & x^{2} + y^{2} + 6z^{2}\\
  & x^{2} + y^{2} - xz + 7z^{2}.
\end{align*}
\end{thm}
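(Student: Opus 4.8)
The plan is to verify each of the 11 ternary forms represents every positive integer coprime to $3$ by combining the theory of regular ternary forms with a finite local computation. First I would check which of these 11 forms are \emph{regular} in the sense of Section~\ref{back}, using (for instance) the classification of regular ternary quadratic forms due to Jagy, Kaplansky, and Schiemann \cite{JKS}. For a regular form, it suffices to show that every positive integer $n$ with $\gcd(n,3)=1$ is \emph{locally} represented, i.e., represented over $\Z_p$ for every prime $p$ (and over $\R$, which is immediate since each form is positive-definite). Since the forms listed have small discriminant, supported only at the primes $2$ and $3$, the local representability condition reduces to a congruence condition: for each such form one computes, at $p=2$ and $p=3$, the set of $p$-adic units (and more generally the $p$-adic squareclasses) represented, and confirms that every $n$ coprime to $3$ lies in a represented squareclass at $p=3$, and that the analogous $2$-adic obstruction never excludes such an $n$. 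This is the standard local analysis via Jordan splittings of ternary $\Z_p$-lattices.

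For the forms on the list that are not regular, one cannot rely solely on local conditions; here I would invoke the modular forms machinery described in the introduction. The theta series of a ternary form is a weight-$3/2$ modular form, so strictly one is in the Shimura-lift setting rather than the integral-weight setting used for the quaternaries; however, for the purpose of this theorem it is cleaner to note that each of the 11 ternaries arises as (or sits inside) an escalation of a relatively universal quaternary already under control, or else to observe directly that the relevant class numbers are $1$. Concretely, one checks $|\Gen(Q)| = 1$ for each form by a mass-formula computation; when the genus is a single class, $Q$ represents exactly the integers it locally represents, and we are reduced to the congruence analysis of the previous paragraph. The Magma scripts referenced in the acknowledgements carry out exactly these genus and local-density computations.

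Finally I would assemble the argument: for each of the 11 Gram matrices, (i) confirm it has no truant below $1000$ by direct enumeration (already asserted in the text preceding the statement), (ii) establish $|\Gen(Q)|=1$ or regularity, and (iii) carry out the $2$-adic and $3$-adic representability checks showing every $n$ with $\gcd(n,3)=1$ is locally represented. Steps (ii) and (iii) together force global representation of all such $n$. The main obstacle is step (iii) done uniformly: one must be careful that the $2$-adic analysis of these ternary $\Z_2$-lattices genuinely imposes no obstruction on integers coprime to $3$ — ternary forms always omit infinitely many squareclasses $2$-adically, so the content is that none of the omitted squareclasses meets $\{n : \gcd(n,3)=1\}$ in an obstructing way, or that any $2$-adic obstruction is itself a multiple of $3$. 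Verifying this cleanly for all $11$ forms, rather than a single illustrative case, is where the real work lies.
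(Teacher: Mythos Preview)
Your core strategy matches the paper's exactly: show each of the 11 forms is regular and then verify that every positive integer coprime to $3$ is locally represented. The paper dispatches the regularity step by observing that $8$ of the $11$ forms have genus one (hence are automatically regular) and the remaining $3$ appear on the Jagy--Kaplansky--Schiemann list of proven regular ternaries; thus \emph{all} $11$ are regular, and your detour through half-integral-weight modular forms for ``the forms on the list that are not regular'' is unnecessary.

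Two small inaccuracies to clean up. First, the discriminants are not all supported on $\{2,3\}$: for example $x^{2}+xy+2y^{2}-xz+5z^{2}$ has Gram-matrix determinant $66$, so one must also check $p=11$ there (this is harmless, since at any odd prime the local check is routine). Second, $\Q_{2}^{*}/(\Q_{2}^{*})^{2}$ has only eight elements, so ``ternary forms always omit infinitely many squareclasses $2$-adically'' is misstated; the genuine content is that a ternary $\Z_{2}$-lattice may miss at most one squareclass of units, and one checks case by case that the forms here miss nothing coprime to~$3$. The paper simply asserts this local verification is ``easy,'' and it is.
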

\begin{proof}
  It is easy to verify that all $11$ of these quadratic form locally
  represent all integers $n$ coprime to $3$. Of these 11 forms, 8 are
  in a genus of size $1$, and this automatically implies that they are
  regular. The remaining three are also regular (since they occur on
  the list of forms proven regular in \cite{JKS}). Since a regular
  form represents everything that is represented locally, and each
  form locally represents everything coprime to $3$, each of these
  $11$ forms represents all positive integers coprime to $3$.
\end{proof}

Escalating the remaining $39$ ternary quadratic forms gives rise to
$8894$ quadratic forms $Q$ in four variables. Of these $8884$ locally
represent everything coprime to $3$. (We refer to these as the
\emph{basic} quaternary lattices.)  The remaining $10$ fail to locally
represent all integers coprime to $3$.  These ten lattices all have
either $14$, $22$ or $35$ as their truant.  We take a ternary
sublattice of each, and escalate by the truant of the quaternary
lattice - this gives rise to $727$ \emph{extra} quaternary lattices
(all of which locally represent everything coprime to
$3$). Understanding the squarefree integers that are not represented
by these $8894+727-10 = 9611$ basic and extra quadratic forms suffices
to prove Theorem~\ref{main}.  The next several sections outline how we
understand these $9611$ quadratic forms.

\section{Regular ternaries}
\label{regtern}

Given a quaternary lattice $L$, the first thing we check is if $L$ has
a ternary sublattice $L'$ whose quadratic form is one of the $11$ listed
in Theorem~\ref{ternaries}. If so, then for every number $t$ coprime
to $3$, $L$ has a vector with norm $t$ and this means that the quadratic
form corresponding to $L$ represents $t$. Therefore it is relatively
universal.

\begin{ex}
Form 8703 is $Q(x,y,z,w) = x^{2} - xz + y^{2} - yz + 2z^{2} + 7w^{2}$.
We have that $Q(-y-z,x,-y+z,0) = x^{2} + xy + 2y^{2} - xz - 2yz + 4z^{2}$.
This is the third universal ternary listed in Theorem~\ref{ternaries}. Since
this ternary form represents everything coprime to $3$, so does $Q$.
\end{ex}

This first method applies to $644$ of the $8884$ basic quaternary lattices,
and $2$ of the $727$ extra lattices.

The second method takes a quaternary lattice $L$ and searches for a
ternary sublattice $K$ so that the quadratic form corresponding to $K$
is regular, and the quadratic form corresponding to
$K \oplus K^{\perp}$ locally represents everything coprime to $3$. We
may write the quadratic form corresponding to $K \oplus K^{\perp}$ as
$Q(x,y,z,w) = T(x,y,z) + dw^{2}$.  Since $T$ is regular, whether an integer $m$ is
represented by $T$ depends only on congruence conditions.

The paper \cite{JKS} proves that there are at most $913$ ternary quadratic
forms, and $891$ of those are definitely regular. In \cite{BOh}, another
$8$ ternary quadratic forms are proven regular (and in \cite{RLO}, the remaining $14$ are proven regular assuming the Generalized Riemann Hypothesis). We use
the list of $899$ provably regular ternary quadratic forms.

We let $M$ be a positive integer divisible by all primes dividing the determinant of $T$ so that for every $a$, either $T$ represents every squarefree integer
$\equiv a \pmod{M}$ or no integer $\equiv a \pmod{M}$.

We create a queue of residue classes to check, initially including
all residue classes modulo $M$ that contain integers coprime to $3$
that are not represented by $T$. If $a \pmod{M}$ is such a residue class,
there is some integer $n = T(x,y,z) + dw^{2} \equiv a \pmod{M}$ 
that is represented by $Q(x,y,z,w)$. If $T(x,y,z) = b \ne 0$,
there is some arithmetic progression $b \pmod{M'}$ of positive integers 
represented by $T(x,y,z)$ and hence every integer $\equiv a \pmod{M'}$ and
greater than or equal to $n$ is represented by $Q$.

If $M' = M$, then the only positive integers $\equiv a \pmod{M}$ not
represented by $Q$ are those less than $n$. If $M' > M$, we add the
residue classes $a + kM \pmod{M'}$ to the queue (for $0 < k < M'/M$).
We proceed until the queue is empty.

\begin{ex}
Consider Form 238, $Q(x,y,z,w) = x^{2} + y^{2} + yz + 2z^{2} - xw + 23w^{2}$ and
let $L$ be the corresponding quaternary lattice. There is a lattice $L'$
of index two in $L$ of the form $K \oplus K'$. The quadratic form
corresponding to $L'$ is $x^{2} + y^{2} + yz + 2z^{2} + 91w^{2}$.
The form $T(x,y,z) = x^{2} + y^{2} + yz + 2z^{2}$ is regular,
and represents every positive integer except those of the form
$7^{r} m$ where $r$ is odd and $\legen{m}{7} = -1$. Thus,
the modulus of $M$ is $49$, and initially our queue is set to include
the classes $21 \pmod{49}$, $35 \pmod{49}$ and $42 \pmod{49}$.

When we test the residue class $21 \pmod{49}$, we find that neither
$21$ nor $70$ is represented by $T(x,y,z) + 91w^{2}$. However,
$119 = 91 + 28$ is represented and since $Q$ represents everything
$\equiv 28 \pmod{49}$, all $n > 70$ with $n \equiv 21 \pmod{49}$
are represented represented by $Q$.

When we test the residue class $35 \pmod{49}$, we find that the
smallest positive integer in this residue class represented is
$378 = 91 \cdot 2^{2} + 14$. Since $T$ represents
everything $\equiv 14 \pmod{49}$, $Q$ represents everything $\equiv 35 \pmod{49}$
and greater than $378$.

When we test the residue class $42 \pmod{49}$ we find that $42$ is not
represented, that $91$ is represented but only as $91 \cdot 1^{2} + 0$.
We have $140 = 91+49$. Now, $T$ does not represent all numbers $\equiv 0 \pmod{49}$, but it does represent all those $\equiv 49 \pmod{343}$. This proves
that $Q$ represents all numbers $\equiv 140 \pmod{343}$, but we add to the
queue the six classes $42 \pmod{343}$, $91 \pmod{343}$, $189 \pmod{343}$,
$238 \pmod{343}$, $287 \pmod{343}$, and $336 \pmod{343}$. These
are easily checked.

We find in the end that $T(x,y,z) + 91w^{2}$ represents all positive
integers except $21$, $35$, $42$, $70$, $84$, $133$, $182$, $231$, $280$ and 
$329$. Testing $Q$, we find that it represents all positive integers coprime
to $3$ except $70$. 
\end{ex}

This method applies to $3465$ of the basic quaternary lattices, and
$166$ of the extra lattices.

\section{Modular forms}
\label{modform}

If $Q = \frac{1}{2} \vec{x}^{T} A \vec{x}$ is a positive-definite
quaternary quadratic form, then we use the theta series
\[
  \sum_{n=0}^{\infty} r_{Q}(n) q^{n} = \sum_{n=0}^{\infty} a_{E}(n) q^{n}
  + \sum_{n=1}^{\infty} a_{C}(n) q^{n}
\]
and enumerate all squarefree $n$ so that $a_{E}(n) \leq |a_{C}(n)|$.

The Eisenstein coefficient $a_{E}(n) = \prod_{p \leq \infty} \beta_{p}(n)$.
Formulas for local densities are known (see \cite{Hanke} and \cite{Yang})
and imply that if $n$ is squarefree, then
\[
  \beta_{p}(n) =
  \begin{cases}
    \frac{\pi^{2} n}{\sqrt{\det(A)}} & \text{ if } p = \infty\\
    1 - \frac{\chi_{D}(p)}{p^{2}} & \text{ if } p \nmid N \text{ and } p \nmid n\\ 
    \frac{(p-1)(p^{2} + (1+\chi_{D}(p))p + 1)}{p^{3}} & \text{ if }
  p \nmid N \text{ and } p | n.
\end{cases}
\]
Thus,
\begin{align*}
  a_{E}(n) &= \frac{\pi^{2} n}{\sqrt{\det(A)}} \left(\prod_{p | N} \beta_{p}(n)\right) 
\prod_{\substack{p \nmid N \\ p | n}} \frac{(p-1)(p^{2} + (1+\chi_{D}(p)) p + 1)}{p^{3}} \prod_{p \nmid Nn} \left(1 - \chi_{D}(p)/p^{2}\right)\\
  &= \frac{\pi^{2} n}{\sqrt{\det(A)}}
  \left(\prod_{p | N} \frac{\beta_{p}(n)}{1 - \chi_{D}(p)/p^{2}}\right)
  \left(\prod_{\substack{p \nmid N \\ p | n}}
  \frac{(p-1)(p^{2} + (1 + \chi_{D}(p))p + 1)}{p^{3} - \chi_{D}(p) p}\right)
  \prod_{p} \left(1 - \chi_{D}(p)/p^{2}\right)\\
  &\geq \frac{\pi^{2} n}{\sqrt{\det(A)} L(2,\chi_{D})}
  \left(\prod_{p | N} \frac{\beta_{p}(n)}{1 - \chi_{D}(p)/p^{2}}\right)
  \prod_{\substack{p \nmid N \\ p | n, \chi_{D}(p) = -1}} \left(\frac{p-1}{p+1}\right).
\end{align*}
We compute $\beta_{p}(n)$ for all primes $p | 2N$ and for all of the
different $\mathbb{Z}_{p}$ square classes containing squarefree
integers. Unlike the past work of Rouse \cite{451paper}, we do so by using the 
non-recursive formulas given in \cite{Yang}. These are more efficient
than the procedure given in \cite{Hanke}. In this
way, we compute a constant $C_{E}$ so that 
\[
  a_{E}(n) \geq C_{E} \prod_{\substack{p | n \\ p \nmid N, \chi_{D}(p) = -1}}
  \frac{p-1}{p+1}
\]
for all squarefree positive integers $n$.

Any cusp form $C(z)$ can be decomposed as
\[
  C(z) = \sum_{d | N} \sum_{i=1}^{s} \sum_{e | \frac{d}{{\rm cond}~\chi}} g_{i}(ez)
\]
where $g_{i}(z)$ is a normalized Hecke eigenform living in the new subspace
of $S_{2}(\Gamma_{0}(d), \chi)$. The $n$th Fourier coefficient of $g_{i}(z)$
has size at most $d(n) \sqrt{n}$. Thus, if we set
\[
  C_{Q} = \sum_{d | N} \sum_{i=1}^{s} \sum_{e | \frac{d}{{\rm cond}~\chi}} \frac{|c_{d,i,e}|}{\sqrt{e}},
\]
we have $|a_{C}(n)| \leq C_{Q} d(n) \sqrt{n}$. This implies that
there is a constant $F$ so that if
\[
  F_{4}(n) := \frac{\sqrt{n}}{d(n)} \prod_{\substack{p \nmid N, p | n \\ \chi_{D}(p) = -1}} \frac{p-1}{p+1} > F,
\]
then $n$ is represented by $Q$. 

In the third method, we will explicitly compute the newforms $g_{i}(z)$
(using the modular symbols algorithm provided in Magma \cite{Magma}) and
compute the constants $C_{Q}$ and $F$. This procedure is somewhat time consuming. Once we have computed $F$, we will enumerate all squarefree integers
with $F_{4}(n) \leq F$, and see which of these are represented by $Q$, and
which are not.

\begin{ex}
  Consider Form 8819,
  $Q(x,y,z,w) = x^{2} + y^{2} + 7z^{2} - xw - yw + 7zw + 12w^{2}$,
  with corresponding lattice $L$. This form has level $546$. The
  dimension of $S_{2}(\Gamma_{0}(546), \chi_{273})$ is $104$. We
  compute that $C_{E} = 12/37$, and that $C_{Q} \approx 23.925$. This
  yields that $F = 74.507$. Any squarefree $n$ with $F(n) \leq F$ has
  at most $8$ prime factors, all of which are $\leq 79939$. There are
  a total of $395007$ squarefree $n$ coprime to $3$ for which
  $F(n) \leq F$. The form
  $Q'(x,y,z,w) = w^{2} + 2x^{2} + 14y^{2} + 78z^{2}$ is the quadratic
  form corresponding to a sublattice of $L$.  We make an array of the
  values represented by $T(x,y,z) = 2x^{2} + 14y^{2} + 78z^{2}$ with
  $0 \leq x, y \leq 800$ and $z \leq 278$. For each of the $395007$
  squarefree integers $n$, we check to see if there is some integer
  $w$ so that $n-w^{2}$ is represented by $T(x,y,z)$. Of the $395007$
  squarefree $n$ coprime to $3$ with $F(n) \leq F$, at most $9$ are
  not represented by $Q'$, and $Q$ represents all positive integers
  coprime to $3$ except $19$, $22$, $31$, $35$ and $133$.
\end{ex}

A discriminant $D$ is an integer $\equiv 0 \text{ or } 1 \pmod{4}$.  A
\emph{fundamental discriminant} is a discriminant $D$ that is not a
square multiple of another discriminant. The method above is used for
all remaining quadratic forms $Q$ for which $\det(A)$ is not a fundamental
discriminant. There are $1906$ such basic quaternary lattices, and $361$
extra lattices. A few of these cases are quite time consuming. For
example, Form 3391 is handled with this method, and requires about
$22$ hours of computation time to compute the $g_{i}$ and the
constants $c_{d,i,e}$. This motivates an additional method for
computing an upper bound on $C_{Q}$, but not exactly computing it.

\section{Petersson inner products}
\label{petnorm}

For the remaining cases, we use the method introduced in \cite{451paper}.
If $f, g \in S_{2}(\Gamma_{0}(N), \chi)$, the Petersson inner product of
$f$ and $g$ is defined by
\[
  \langle f, g \rangle = \frac{3}{\pi [\SL_{2}(\Z) : \Gamma_{0}(N)]}
  \iint_{\H / \Gamma_{0}(N)} f(x+iy) \overline{g(x+iy)} \, dx \, dy.
\]
We will assume throughout this section that $\chi$ is a primitive Dirichlet
character modulo $N$. This means that the new subspace of $S_{2}(\Gamma_{0}(N),\chi)$ is the entire space, and that any cusp form $C(z)$ in this space 
has a decomposition
\[
  C(z) = \sum_{i=1}^{s} c_{i} g_{i}(z), \quad s = \dim S_{2}(\Gamma_{0}(N), \chi)
\]
where the $g_{i}$ are newforms. It is known that distinct newforms
are orthogonal with respect to the Petersson inner product, and hence
$\langle C, C \rangle = \sum_{i=1}^{s} |c_{i}|^{2} \langle g_{i}, g_{i} \rangle$.
Therefore, if $Q$ is a quadratic form and $\theta_{Q}(z) = E(z) + C(z)$,
we may bound $C_{Q}$ by finding positive $A$ and $B$ so that $\langle C, C \rangle \leq A$ and $\langle g_{i}, g_{i} \rangle \geq B$ for all $i$.
Then, the Cauchy-Schwarz inequality gives
\[
  C_{Q} = \sum_{i=1}^{s} |c_{i}| \leq \sqrt{s} \sqrt{\sum_{i=1}^{s} |c_{i}|^{2}}
  \leq \sqrt{\frac{As}{B}}.
\]

A newform $g_{i}$ is said to have \emph{complex multiplication} or CM
if $g_{i}$ arises from a Hecke Gr\"ossencharacter, or equivalently
if there is some negative integer $-D$ so that $\legen{-D}{p} = -1$
implies that the $p$th coefficient of $g_{i}$ is equal to zero.
If $g_{i}$ does not have complex multiplication, then Proposition~11
of \cite{451paper} proves that
\[
  \langle g_{i}, g_{i} \rangle \geq \frac{3}{208 \pi^{4} \prod_{p | N} (1+1/p) \log(N)}.
\]
For a given $Q$, we can also explicitly enumerate all CM forms in
$S_{2}(\Gamma_{0}(N), \chi)$ and verify that the same bound holds for those.

To compute an upper bound on $\langle C, C \rangle$, we use the following
method. If $\epsilon \in \{ \pm 1 \}$, define
\[
  S_{2}^{\epsilon}(\Gamma_{0}(N), \chi) = \{ f \in S_{2}(\Gamma_{0}(N),\chi) :
  \text{ if } f = \sum a(n) q^{n}, \text{ then } a(n) = 0 \text{ if }
  \chi(n) = -\epsilon \}.
\]
Instead of working directly with $Q = \frac{1}{2} \vec{x}^{T} A \vec{x}$, 
we work with $Q^{*} = \frac{1}{2} \vec{x}^{T} NA^{-1} \vec{x}$. Let
$\theta_{Q^{*}} = E^{*}(z) + C^{*}(z)$ be the decomposition into Eisenstein
series and cusp forms. Proposition 15
of \cite{451paper} shows that $\langle C, C \rangle = N \langle C^{*}, C^{*} \rangle$ and also that $C^{*} \in S_{2}^{-}(\Gamma_{0}(N), \chi)$. Finally,
if $f(z) = \sum a(n) q^{n} \in S_{2}^{-}(\Gamma_{0}(N), \chi)$,
Proposition 14 of \cite{451paper} gives a formula for $\langle f, f \rangle$.
Let $\psi(x) = -\frac{6}{\pi} K_{1}(4 \pi x) + 24 x^{2} K_{0}(4 \pi x)$,
where $K_{0}$ and $K_{1}$ are the usual $K$-Bessel functions. Then
\[
  \langle f, f \rangle
  = \frac{1}{[\SL_{2}(\Z) : \Gamma_{0}(N)]}
  \sum_{n=1}^{\infty} \frac{2^{\omega(\gcd(n,N))} a(n)^{2}}{n} \sum_{d=1}^{\infty} \psi\left(d \sqrt{\frac{n}{N}}\right).
\] 
We compute the first $15N$ coefficients of $C^{*}$ and let
$C_{1}$ be the result of adding the terms above for $1 \leq n \leq 15N$.
A bound on $\langle C^{*}, C^{*} \rangle$ translates into a bound on $a(n)$.
Using this, we obtain the inequality
\[
  \langle C^{*}, C^{*} \rangle
  \leq C_{1} + 1.71 \cdot 10^{-18} \frac{N^{7/4} s}{B} \left(1 + \frac{1}{15N}\right)^{3/2} \langle C^{*}, C^{*} \rangle.
\]
If the coefficient of $\langle C^{*}, C^{*} \rangle$ on the right hand
side is less than $1$, we can solve the inequality and obtain an upper
bound on $\langle C^{*}, C^{*} \rangle$ and hence on
$\langle C, C \rangle$. (The smallest $N$ for which the coefficient is
larger than $1$ is $44520$, and all of the forms we work with have
level $\leq 4292$.) Combining this with the lower bound on $\langle g_{i}, g_{i} \rangle$ for all $i$, this gives an upper bound on $C_{Q}$.

\begin{ex}
  Form 3995 is
  $Q(x,y,z,w) = x^{2} + 2y^{2} - xz + yz + 5z^{2} - yw + 29w^{2}$ and
  has $\det(A) = N = 4273$, a fundamental discriminant. We have that
  $s = \dim S_{2}(\Gamma_{0}(4273), \chi_{4273}) = 354$. There are no
  newforms with CM in this space, and $B = 2.66538 \cdot 10^{-5}$ is a
  lower bound for $\langle g_{i}, g_{i} \rangle$ for all $i$. Using
  the method described above, we find that
  $0.01297 \leq \langle C, C \rangle \leq 0.013026$ and this gives
  $C_{Q} \leq 415.9506$. It follows that if $n$ is squarefree and
  coprime to $3$ and $F_{4}(n) > 996.385$, the $n$ is represented by
  $Q$. This calculation requires $77$ seconds.

There are precisely $193766918$ squarefree integers coprime to $3$ for
which $F_{4}(n) \leq 996.385$ and the only one of these that is not
represented by $Q$ is $37$. This calculation takes $177$
seconds. Therefore, $Q$ represents every positive integer coprime to
$3$ except $37$.
\end{ex}

We use this method on all of the remaining forms. This accounts for $2869$ basic
quaternary forms, and $198$ extra quaternary forms.

\section{Proofs}
\label{pfs}

\begin{proof}[Proof of the CCXC Theorem]
If $Q$ is a positive-definite quadratic form that represents the
numbers in the statement of Theorem~\ref{main}, then the corresponding
lattice $L$ contains either (i) a relatively universal ternary lattice,
(ii) one of the $8884$ basic quaternary lattices, or (iii) one
of the $10$ quaternary lattices that fails to locally represent everything
coprime to 3.

In the first case, $Q$ is relatively universal. 

In the second case, examining the squarefree positive integers not
represented by the basic quaternary forms shows that the only integers
coprime to $3$ that might (i) not be represented by $Q$, and (ii) not
be in the statement of Theorem CCXC are $133$ and $187$. This is because 
form $6841$ has truant $17$ but fails to represent $187$, and
form
$8819$ has truant $19$ but fails to represent $133$. We compute all escalations
of form $6821$ (there are $22310$) and none fails to represent $187$.
We compute all escalations of form $8819$ (there are $20184$) and find that each of
them represents $133$. Thus, $Q$ is relatively universal.

In the third case, we must examine the extra lattices. Among the $727$
extra lattices, we find forms that fail to represent $86$, $91$, 
$133$, $139$, $142$, $154$, $166$, $182$, $214$, $238$, $266$,
$287$, $322$, $329$, $406$ and $434$. We escalate each of the $10$
forms that fail to locally represent all integers coprime to $3$
by their truant, and check each escalation to see if it fails to
represent any of these numbers. All escalations of these $10$ basic
lattices represent all integers coprime to $3$ except those in the statement
of the theorem, and this shows that if $L$ contains one of these $10$ lattices,
then $Q$ must be relatively universal. This completes the proof. The Magma
scripts used and log files are available at \url{http://users.wfu.edu/rouseja/CCXC/}.
\end{proof}

\begin{rem}
There are only two quadratic forms in our list that fail to represent $119$:
form $379$ and form $8891$ (one of those that fails to locally represent all
integers coprime to $3$). Form $379$ has truant $70$ and all $264341$ 
escalations of this form by $70$ represent $119$. Form $8891$ has a unique
escalation by its truant $35$ that fails to represent $119$,
namely $x^{2} + y^{2} + 7z^{2} + 7zw + 14w^{2} + 35v^{2}$.
\end{rem}

\begin{proof}[Proof of Corollary~\ref{necessary}]
Let $t$ be one of the positive integers in the statement of the CCXC Theorem.
Then there is a form $Q(\vec{x})$ that has truant $t$ (see Appendix~A). The form
\[
  Q(\vec{x}) + (t+1)(a^{2} + b^{2} + c^{2} + d^{2}) + \sum_{i=1}^{m-1}
  (t+1+i) x_{i}^{2}
\]
can be easily seen (by Lagrange's four square theorem) to represent
every positive integer larger than $t$, and every positive integer coprime to $3$ less than $t$. This proves the desired claim.
\end{proof}

\begin{proof}[Proof of Corollary~\ref{corollary}]
  We escalate the one-dimensional lattice with Gram matrix $[2]$
  repeatedly, taking only integer-matrix escalations. We use the CCXC
  Theorem to determine if our escalator lattices are relatively
  universal, and proceed until we have found relatively universal
  escalator lattices.  There are two binary escalator lattices
  corresponding to $x^{2} + y^{2}$ and $x^{2} + 2y^{2}$ with truants
  $7$ and $5$, respectively.  Escalatings these two gives $12$ ternary
  lattices of which two are relatively universal.

Escalating the other $10$ gives rise to $261$ quadratic lattices,
of which all but $15$ are relatively universal. All but two of the
escalations of these 15 quaternary lattices are universal. These
correspond to $x^{2} + y^{2} + 4z^{2} + 4zv + 7w^{2} + 13v^{2}$
and $x^{2} + y^{2} + 7z^{2} + 13w^{2} + 12wv + 13v^{2}$. Of the integers
relatively prime to $3$ and less than $290$, these fail to represent only
$35$. Hence, all escalations will represent all integers coprime to $3$
less than $290$ and so will be relatively universal.
\end{proof}

\vfill\eject
\appendix
\section{Table of quadratic forms with given truants}
\label{truant_table}

\small
\begin{center}
\begin{tabular}{l|c}
Form & Truant\\
\hline
$2x^{2}$ & $1$\\
$x^{2}$ & $2$\\
$x^{2} + 2y^{2}$ & $5$\\
$x^{2} + y^{2}$ & $7$\\
$x^{2} -xy + y^{2} + 2z^{2} + 2zw + 4w^{2}$ & $10$\\
$x^{2} + y^{2} + 5z^{2}$ & $11$\\
$x^{2} + 2y^{2} - xz + 2yz + 5z^{2} - xw - 2zw + 4w^{2}$ & $13$\\
$x^{2} + y^{2} + 2z^{2} - yw + 3w^{2}$ & $14$\\
$x^{2} + xy + 2y^{2} - xz - yz + 4z^{2} + zw + 5w^{2}$ & $17$\\
$x^{2} + xy + 2y^{2} - xz - yz + 3z^{2} - xw - 2zw + 6w^{2}$ & $19$\\
$x^{2} + y^{2} - xz + yz + 7z^{2} - 2zw + 10w^{2}$ & $22$\\
$x^{2} + 2y^{2} + yz + 5z^{2} - xw + 7w^{2}$ & $23$\\
$x^{2} + 2y^{2} + 3z^{2} - 2yw + 3zw + 9w^{2}$ & $26$\\
$x^{2} + 2y^{2} - xz - yz + 4z^{2}$ & $29$\\
$x^{2} + xy + 2y^{2} - xz + yz + 5z^{2} + 2zw + 10w^{2}$ & $31$\\
$x^{2} + xy + 2y^{2} - xz + yz + 3z^{2} + 17w^{2}$ & $34$\\
$x^{2} + 2y^{2} + 5z^{2} - xw - 2yw + zw + 10w^{2}$ & $35$\\
$x^{2} + 2y^{2} - xz + yz + 5z^{2} - xw - 2zw + 12w^{2}$ & $37$\\
$x^{2} + y^{2} - xz + 5z^{2} - xw - yw - 3zw + 11w^{2}$ & $38$\\
$x^{2} + y^{2} - xz + yz + 5z^{2} - xw + 5zw + 11w^{2}$ & $46$\\
$x^{2} + y^{2} + yz + 6z^{2} + 9w^{2}$ & $47$\\
$x^{2} + y^{2} + 7z^{2} - xw + 7zw + 8w^{2}$ & $55$\\
$x^{2} + 2y^{2} + 3z^{2} - xw - yw + 3zw + 8w^{2}$ & $58$\\
$x^{2} + xy + 2y^{2} + 5z^{2} - wz + 5zw + 6w^{2}$ & $62$\\
$x^{2} + y^{2} + yz + 2z^{2} - xw + 23w^{2}$ & $70$\\
$x^{2} + y^{2} - xz + yz + 5z^{2} - xw - yw + 12w^{2}$ & $94$\\
$x^{2} + y^{2} + yz + 3z^{2} + 22w^{2}$ & $110$\\
$x^{2} + y^{2} + 7z^{2} + 7zw + 14w^{2} + 35v^{2}$ & $119$\\
$x^{2} + 2y^{2} - xz - yz + 4z^{2} + 29w^{2}$ & $145$\\
$x^{2} + 2y^{2} - xz - yz + 4z^{2} + 29w^{2} + 29zv + 58wv + 145v^{2}$ & $203$\\
$x^{2} + 2y^{2} - xz - yz + 4z^{2} + 29w^{2} + 29zv + 87wv + 145v^{2}$ & $290$\\
\end{tabular}
\end{center}
\normalsize

\bibliographystyle{amsplain}
\bibliography{290refs}

\end{document}